\numberwithin{equation}{section}
\newtheorem{theorem}{Theorem}[section]
\newtheorem{proposition}[theorem]{Proposition}
\newtheorem{lemma}[theorem]{Lemma}
\theoremstyle{definition}
\begin{document}

\title[Quot schemes and Fourier-Mukai transformation]{Quot schemes
and Fourier-Mukai transformation}

\author[I. Biswas]{Indranil Biswas}

\address{Mathematics Department, Shiv Nadar University, NH91, Tehsil
Dadri, Greater Noida, Uttar Pradesh 201314, India}

\email{indranil.biswas@snu.edu.in, indranil29@gmail.com}

\author[U. Dubey]{Umesh V Dubey}

\address{Harish-Chandra Research Institute, Chhatnag Road, Jhunsi,
Prayagraj 211019, India}

\email{umeshdubey@hri.res.in}

\author[M. Kumar]{Manish Kumar}

\address{Statistics and Mathematics Unit, Indian Statistical Institute,
Bangalore 560059, India}

\email{manish@isibang.ac.in}

\author[A.J. Parameswaran]{A. J. Parameswaran}

\address{School of Mathematics, Tata Institute of Fundamental
Research, Homi Bhabha Road, Bombay 400005, India}

\email{param@math.tifr.res.in}

\subjclass[2010]{14C05, 14L30}

\keywords{Quot scheme, Hilbert scheme, Fourier-Mukai transformation, symmetric product}

\begin{abstract}
We consider several related examples of Fourier-Mukai transformations involving the quot 
scheme. A method of showing conservativity of these Fourier-Mukai transformations is
described.
\end{abstract}

\maketitle

\section{Introduction}

Fourier-Mukai transformations arise in numerous contexts in algebraic geometry \cite{Hu}, \cite{BBH}.
Over time it has emerged to be an immensely useful concept. Here we investigate
Fourier-Mukai transformations in a particular context, namely in the set-up of quot schemes.
We show that the conservativity of the Fourier-Mukai transformation holds in the following cases:
\begin{enumerate}
\item Let $M$ be an irreducible smooth projective variety over an algebraically closed field $k$
such that $\dim M \, \geq\, 2$. Denote by $\text{Hilb}^d(M)$ the Hilbert scheme parametrizing the
zero-dimensional subschemes of $M$ of length $d$. Let $P_M$ (respectively, $P_H$) be the
projection to $M$ (respectively, $\text{Hilb}^d(M)$) of the tautological subscheme
${\mathcal S}\, \subset\, M\times \text{Hilb}^d(M)$. 
Let $E$ and $F$ be two vector bundles on $M$ such that the vector bundles $P_{H*}P^*_M E$ and
$P_{H*}P^*_M F$ on ${\rm Hilb}^d(M)$ are isomorphic. Then we show that $E$ and $F$ are isomorphic
(see Proposition \ref{prop1}).

\item Let $Q_M$ (respectively, $Q_S$) be the projection to $M$ (respectively, $\text{Sym}^d(M)$) 
of the tautological subscheme ${\mathbb S}\, \subset\, M\times\text{Sym}^d(M)$. 
Let $E$ and $F$ be two vector bundles on $M$ such that the vector bundles $Q_{S*}Q^*_M E$ and
$Q_{S*}Q^*_M F$ on ${\rm Sym}^d(M)$ are isomorphic. Then we show that $E$ and $F$ are isomorphic
(see Lemma \ref{lem1}).

\item Let $C$ be an irreducible smooth projective curve defined over $k$.
Fix a vector bundle $E$ over $C$ of rank at least two.
Let ${\mathcal Q}^d(E)$ denote the quot scheme parametrizing the torsion quotients of $E$
of degree $d$. There is a tautological quotient
$\Phi^*_C E \, \longrightarrow\, {\mathbf Q}$ over $C\times{\mathcal Q}^d(E)$, where
$\Phi_C\, :\, C\times {\mathcal Q}^d(E) \, \longrightarrow\, C$ is the natural projection.
Let $V$ and $W$ be vector bundles on $C$ such that the vector bundles
$\Phi_{Q*}((\Phi^*_C V)\otimes {\mathbf Q})$ and $\Phi_{Q*}((\Phi^*_C W)\otimes {\mathbf Q})$
on ${\mathcal Q}^d(E)$ are isomorphic, where $\Phi_Q\, :\, C\times
{\mathcal Q}^d(E)\, \longrightarrow\, {\mathcal Q}^d(E)$ is the natural projection.
Then we show that $E$ and $F$ are isomorphic (see Proposition \ref{prop2}).
\end{enumerate}

We also prove a similar result in the context of vector bundles on curves equipped with a
group action (see Section \ref{se3}).

A key method in our proofs is the Atiyah's Krull--Schmidt theorem for vector bundles.

\section{A Fourier-Mukai transformation}

\subsection{Vector bundles on Hilbert schemes}

Let $k$ be an algebraically closed field. Let $M$ be an irreducible smooth projective variety over $k$
such that $\dim M \, \geq\, 2$.
For any integer $d\, \geq\, 2$, let $\text{Hilb}^d(M)$ denote the Hilbert scheme parametrizing the
zero-dimensional subschemes of $M$ of length $d$. We have the natural projections
$$
M \, \xleftarrow{\,\,\,p_M\,\,}\, M\times \text{Hilb}^d(M) \, \xrightarrow{\,\,\,p_H\,\, }\, \text{Hilb}^d(M).
$$
There is a tautological subscheme
$$
{\mathcal S}\, \subset\, M\times \text{Hilb}^d(M)
$$
such that for any $z\, \in\, \text{Hilb}^d(M)$ the preimage $p^{-1}_H(z)$ is the subscheme $z\, \subset\, M$. The restriction
of $p_M$ (respectively, $p_H$) to $\mathcal S$ will be denoted by $P_M$ (respectively, $P_H$).

For any vector bundle $E$ on $M$ we have the direct image $P_{H*}P^*_M E$ on $\mathcal S$. We note that
$P_{H*}P^*_M E$ is locally free because $P_H$ is a finite morphism and $P^*_M E$ is
locally free. Let
\begin{equation}\label{f1}
{\widetilde E}\, :=\, P_{H*}P^*_M E
\end{equation}
be this vector bundle; its rank is $d\cdot\text{rank}(E)$.
It is known that two vector bundles $E$ and $F$ on $M$ are isomorphic if $\widetilde E$ and
$\widetilde F$ are isomorphic \cite{KR}, \cite{BN}. We will give a very simple proof of it.

\begin{proposition}\label{prop1}
Let $E$ and $F$ be two vector bundles on $M$ such that the corresponding vector bundles $\widetilde E$ and
$\widetilde F$ on ${\rm Hilb}^d(M)$ are isomorphic (see \eqref{f1}). Then $E$ and $F$ are isomorphic.
\end{proposition}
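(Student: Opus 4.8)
The plan is to restrict the transform $\widetilde E$ to a carefully chosen subvariety of $\text{Hilb}^d(M)$ on which it splits off a trivial bundle together with a copy of $E$ itself, and then to use Atiyah's Krull--Schmidt theorem to cancel the trivial part. First I would fix $d-1$ distinct closed points $y_1,\ldots,y_{d-1}\in M$, set $z_0=\{y_1,\ldots,y_{d-1}\}$ and $U := M\setminus\{y_1,\ldots,y_{d-1}\}$, and consider the morphism
$$
\iota\,:\,U\,\longrightarrow\,\text{Hilb}^d(M),\qquad x\,\longmapsto\, z_0\cup\{x\}.
$$
This is well defined because for $x\in U$ the subscheme $z_0\cup\{x\}$ has length $d$; concretely, the pullback $(\text{id}_M\times\iota)^*\mathcal S\,\subset\, M\times U$ is the disjoint union of the constant part $z_0\times U$ and the graph $\Delta_U=\{(x,x)\,:\,x\in U\}$, which is flat of relative length $d$ over $U$, so the universal property of the Hilbert scheme supplies $\iota$ together with $(\text{id}_M\times\iota)^*\mathcal S = (z_0\times U)\sqcup\Delta_U$.

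Next I would compute $\iota^*\widetilde E$. Since the universal subscheme is flat and finite over $\text{Hilb}^d(M)$, the formation of $P_{H*}P^*_M E$ commutes with base change, so $\iota^*\widetilde E$ is the direct image to $U$ of $P^*_M E$ restricted to $(z_0\times U)\sqcup\Delta_U$. On the constant part $z_0\times U$ this direct image is the trivial bundle $\bigl(\bigoplus_{i=1}^{d-1}E_{y_i}\bigr)\otimes\mathcal O_U$ of rank $(d-1)r$, where $r=\text{rank}(E)$, while on $\Delta_U$, the graph of the inclusion $U\hookrightarrow M$, it is $E|_U$. Hence
$$
\iota^*\widetilde E\,\cong\,\mathcal O_U^{\oplus (d-1)r}\oplus(E|_U),
$$
and likewise for $F$. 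Comparing ranks in $\widetilde E\cong\widetilde F$ gives $\text{rank}(E)=\text{rank}(F)=r$, and restricting the isomorphism $\widetilde E\cong\widetilde F$ along $\iota$ then yields $\mathcal O_U^{\oplus N}\oplus(E|_U)\cong\mathcal O_U^{\oplus N}\oplus(F|_U)$ with $N=(d-1)r$.

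To pass from $U$ back to $M$ I would invoke the hypothesis $\dim M\geq 2$: the removed set $\{y_1,\ldots,y_{d-1}\}$ has codimension at least $2$, so for the open inclusion $j\,:\,U\hookrightarrow M$ one has $j_*\mathcal O_U=\mathcal O_M$ and, since vector bundles are reflexive, $j_*(E|_U)=E$ and $j_*(F|_U)=F$. Applying $j_*$ to the isomorphism on $U$ therefore produces an isomorphism $\mathcal O_M^{\oplus N}\oplus E\cong\mathcal O_M^{\oplus N}\oplus F$ of vector bundles on the projective variety $M$. Finally, Atiyah's Krull--Schmidt theorem applies on the complete variety $M$: decomposing both sides into indecomposables and using the uniqueness of such a decomposition, the $N$ copies of the indecomposable line bundle $\mathcal O_M$ cancel, leaving $E\cong F$.

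I expect the main obstacle to be the base-change computation of $\iota^*\widetilde E$ in the second step, in particular verifying that $P_H$ is flat, which holds because $\mathcal S\to\text{Hilb}^d(M)$ is the universal flat family, so that the direct image behaves well under restriction and splits according to the two components of the pulled-back family. The codimension-two extension and the Krull--Schmidt cancellation should then be routine.
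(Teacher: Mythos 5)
Your proposal is correct and follows essentially the same route as the paper: restrict $\widetilde E$ along the morphism $x\,\longmapsto\, z_0\cup\{x\}$ to split off a trivial summand of rank $(d-1)r$ plus a copy of $E$, then cancel the trivial part via Atiyah's Krull--Schmidt theorem and recover $E$ across the codimension-two complement. The only (harmless) difference is the order of the last two steps: the paper cancels on the open set $M^0$ itself, using that $M^0$ has no nonconstant functions, and then extends across codimension two, whereas you first push the isomorphism forward to the complete variety $M$ and cancel there.
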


\begin{proof}
Since $\text{rank}(\widetilde{E})$ and $\text{rank}(\widetilde{F})$ are $d\cdot\text{rank}(E)$
and $d\cdot\text{rank}(F)$ respectively, it follows that $\text{rank}(E)\,=\, \text{rank}(F)$. Let
$\text{rank}(E)\,=\, r\,=\, \text{rank}(F)$.

Fix a zero dimensional subscheme $Z^0\, \subset\, M$ of length $d-1$. Let
$Z^0_{\rm red} \, =\, \{x_1,\, \cdots,\, x_b\}\, \subset\, M$ be the reduced subscheme for $Z^0$. The complement
$M \setminus Z^0_{\rm red} \,=\, M\setminus \{x_1,\, \cdots,\, x_b\}$ will be denoted by $M^0$. Let
\begin{equation}\label{e2}
\iota \ :\, M^0\, \longrightarrow\, M
\end{equation}
be the inclusion map. We have a morphism
$$
\varphi\, :\, M^0\, \longrightarrow\, \text{Hilb}^d(M)
$$
that sends any $x\, \in\, M^0$ to $Z_0 \cup \{x\}$. The pullback $\varphi^*{\widetilde E}$
(respectively, $\varphi^*{\widetilde F}$) is isomorphic to $\iota^*E\oplus V_0$ (respectively,
$\iota^*F\oplus V_0$), where $V_0$ is a trivial vector bundle on
$M^0$ of rank $(d-1)r$ and $\iota$ is the map in \eqref{e2}. The vector bundles
$\varphi^*{\widetilde E}$ and $\varphi^*{\widetilde F}$ are isomorphic because
${\widetilde E}$ and ${\widetilde F}$ are isomorphic. So 
\begin{equation}\label{e3}
\iota^*E\oplus V_0\, =\, \iota^*F\oplus V_0 .
\end{equation}

There are no nonconstant functions on $M^0$ (recall
that $\dim M\, \geq \, 2$). Hence using \cite[p.~315, Theorem 2(i)]{At}, from \eqref{e3} it follows that
$\iota^*E\,=\, \iota^*F$ (see \cite{Su} for vast generalizations of \cite{At}). Hence we have
$$
\iota_*\iota^*E\,=\, \iota_*\iota^*F\, .
$$
But $\iota_*\iota^*E$ (respectively, $\iota_*\iota^*F$) is $E$ (respectively, $F$).
This completes the proof.
\end{proof}

The line of arguments in Proposition \ref{prop1} works in some other contexts. We will describe two such
instances.

\subsection{Vector bundles on symmetric product}\label{se2.2}

As before, $M$ is an irreducible smooth projective variety of dimension at least two.
For any integer $d\, \geq\, 2$, let $\text{Sym}^d(M)$ denote the quotient of $M^d$ under the action of
the symmetric group $S_d$ that permutes the factors of the Cartesian product. We recall that $\text{Sym}^d(M)$ is
a normal projective variety. There is a tautological subscheme
\begin{equation}\label{f2}
{\mathbb S}\, \subset\, M\times\text{Sym}^d(M)
\end{equation}
parametrizing all $(z,\, y)\, \in\,M\times\text{Sym}^d(M)$ such that $z\, \in\, y$. Let
$$
Q_M\, :\, {\mathbb S}\, \longrightarrow\, M \ \ \text{ and }\ \ Q_S\, :\, {\mathbb S}\, \longrightarrow\, \text{Sym}^d(M)
$$
be the natural projections. For any vector bundle $E$ on $M$ the direct image
$$
{\widehat E}\, :=\, Q_{S*}Q^*_M E
$$
on $\text{Sym}^d(M)$ is locally free because $Q_S$ is a finite morphism and $Q^*_M E$
is locally free.

\begin{lemma}\label{lem1}
Let $E$ and $F$ be two vector bundles on $M$ such that the corresponding vector bundles $\widehat E$ and
$\widehat F$ on ${\rm Sym}^d(M)$ are isomorphic. Then $E$ and $F$ are isomorphic.
\end{lemma}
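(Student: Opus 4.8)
The plan is to run the argument of Proposition \ref{prop1} essentially verbatim, with the Hilbert scheme replaced by $\text{Sym}^d(M)$ and $\mathcal S$ replaced by the tautological subscheme $\mathbb S$ of \eqref{f2}. First I would compare ranks: the morphism $Q_S$ is finite of degree $d$, and over the open locus of $\text{Sym}^d(M)$ consisting of reduced cycles the fiber of $\widehat E$ is a direct sum of $d$ fibers of $E$; hence $\text{rank}(\widehat E)\,=\,d\cdot\text{rank}(E)$, and likewise for $F$. The hypothesis $\widehat E\,\cong\,\widehat F$ then forces $\text{rank}(E)\,=\,\text{rank}(F)\,=:\,r$.

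Next I would fix a reduced effective zero-cycle $w\,=\,x_1+\cdots+x_{d-1}$ on $M$ given by $d-1$ distinct points, set $Z^0_{\rm red}\,=\,\{x_1,\,\ldots,\,x_{d-1}\}$ and $M^0\,=\,M\setminus Z^0_{\rm red}$, and write $\iota\,:\,M^0\,\hookrightarrow\,M$ for the inclusion. Then define
$$
\varphi\,:\,M^0\,\longrightarrow\,\text{Sym}^d(M),\qquad x\,\longmapsto\,w+x\,=\,x_1+\cdots+x_{d-1}+x.
$$
Since every $x\,\in\,M^0$ is distinct from each $x_i$, the cycle $\varphi(x)$ consists of $d$ distinct points, so $\varphi$ factors through the open locus of $\text{Sym}^d(M)$ over which $Q_S$ is finite \'etale of degree $d$. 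There the pullback of $\mathbb S$ along $\varphi$ breaks up into $d$ disjoint sections over $M^0$ — one for each summand $x_i$ of $w$ and one for the moving point $x$ — none of which collide because $x\,\neq\,x_i$. A base-change computation along these sections then gives
$$
\varphi^*\widehat E\,\cong\,\iota^*E\oplus V_0,\qquad \varphi^*\widehat F\,\cong\,\iota^*F\oplus V_0,
$$
where $V_0$ is the trivial bundle of rank $(d-1)r$ coming from the constant summands $E_{x_1}\oplus\cdots\oplus E_{x_{d-1}}$ (respectively for $F$).

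From $\widehat E\,\cong\,\widehat F$ I would then obtain $\iota^*E\oplus V_0\,\cong\,\iota^*F\oplus V_0$. Exactly as in Proposition \ref{prop1}, since $\dim M\,\geq\,2$ and $Z^0_{\rm red}$ is finite, $M^0$ carries no nonconstant regular functions, so Atiyah's cancellation theorem \cite[p.~315, Theorem 2(i)]{At} applies and yields $\iota^*E\,\cong\,\iota^*F$. Finally, as the complement $M\setminus M^0$ has codimension at least two, one has $\iota_*\iota^*E\,=\,E$ and $\iota_*\iota^*F\,=\,F$, whence $E\,\cong\,F$.

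The hard part will be justifying the splitting $\varphi^*\widehat E\,\cong\,\iota^*E\oplus V_0$ rigorously. Unlike $\text{Hilb}^d(M)$, the symmetric product is only normal and $\mathbb S$ need not be well-behaved over non-reduced cycles, so I would have to verify carefully that $\varphi$ genuinely lands in the \'etale locus of $Q_S$ and that flat base change identifies $\varphi^*\widehat E$ with the direct sum of the pullbacks of $Q_M^*E$ along the $d$ disjoint sections. Once this local analysis over the reduced locus is secured, every remaining step transfers unchanged from the Hilbert scheme case.
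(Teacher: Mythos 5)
Your proposal is correct and follows essentially the same route as the paper: restrict along the map $x\,\longmapsto\, z_0+x$ for a fixed $(d-1)$-cycle, obtain the splitting $\varphi^*\widehat E\,\cong\,\iota^*E\oplus V_0$, and conclude by Atiyah's cancellation theorem and pushing forward across the codimension-two complement. The only differences are cosmetic — you insist on distinct points $x_1,\dots,x_{d-1}$ (the paper allows repetitions) and you supply the base-change justification for the splitting that the paper dismisses as evident.
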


\begin{proof}
Fix any $z_0 \, =\, \{x_1,\, \cdots,\, x_{d-1}\} \, \in \, \text{Sym}^{d-1}(M)$ (repetitions are allowed).
Let
$$
\iota\, :\, M^0\,:=\, M\setminus z_0\, \hookrightarrow\, M
$$
be the inclusion map. We have a morphism
$$
\phi\, :\, M\setminus z_0\, \longrightarrow\, \text{Sym}^d(M),\ \ \ x\, \longmapsto\, \{x,\, z\}.
$$
First note that $\phi^*{\widehat E}\,=\, \phi^*{\widehat F}$ because ${\widehat E}\,=\,
{\widehat F}$. Evidently, we have
$\phi^*{\widehat E}\,=\, (\iota^* E)\oplus {\mathcal O}^{\oplus (d-1)\cdot {\rm rank}(E)}_{M^0}$ and
$\phi^*{\widehat F}\,=\, (\iota^* F)\oplus {\mathcal O}^{\oplus (d-1)\cdot {\rm rank}(F)}_{M^0}$.
Now the argument in the proof of Proposition \ref{prop1} goes through without any changes.
\end{proof}

\subsection{Vector bundles on quot scheme}\label{sec2.3}

Let $C$ be an irreducible smooth projective curve defined over $k$. Fix a vector bundle $E$ over $C$ of rank at least two.
Fix an integer $d\, \geq\, 1$. Let ${\mathcal Q}^d(E)$ denote the quot scheme parametrizing the torsion quotients of $E$
of degree $d$. Let
\begin{equation}\label{j1}
\Phi_C\, :\, C\times {\mathcal Q}^d(E) \, \longrightarrow\, C \ \ \text{ and }\ \ \Phi_Q\, :\, C\times
{\mathcal Q}^d(E)\, \longrightarrow\, {\mathcal Q}^d(E)
\end{equation}
be the natural projections. There is a tautological quotient
\begin{equation}\label{y1}
\Phi^*_C E \, \longrightarrow\, {\mathbf Q}
\end{equation}
over $C\times{\mathcal Q}^d(E)$ whose restriction to any $C\times \{Q\}$, where $Q\, \in\, {\mathcal Q}^d(E)$, is the
quotient of $E$ represented by $Q$.

Given a vector bundle $V$ on $C$, we have the direct image
\begin{equation}\label{e4}
F(V)\, :=\, \Phi_{Q*}((\Phi^*_C V)\otimes {\mathbf Q})\, \longrightarrow\, {\mathcal Q}^d(E),
\end{equation}
where $\Phi_Q$ and $\Phi_C$ are the projections in \eqref{j1}, and $\mathbf Q$ is the quotient in \eqref{y1}; this $F(V)$
is a vector bundle because the support of $\mathbf Q$ is finite over ${\mathcal Q}^d(E)$.

\begin{proposition}\label{prop2}
Let $V$ and $W$ be vector bundles on $C$ such that the corresponding vector bundles $F(V)$ and $F(W)$ are
isomorphic (see \eqref{e4}). Then $V$ and $W$ are isomorphic.
\end{proposition}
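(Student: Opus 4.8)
The plan is to reduce the statement to a question about tautological bundles on the symmetric product $\mathrm{Sym}^d(C)$, thereby trading the troublesome $1$-dimensional base for a higher-dimensional proper one. Note first that comparing ranks in \eqref{e4} gives $\mathrm{rank}(F(V))=d\cdot\mathrm{rank}(V)$, so $\mathrm{rank}(V)=\mathrm{rank}(W)=:r$. The point where the method of Proposition \ref{prop1} breaks down is exactly here: the analogue of its construction would fix $d-1$ quotients and move one point over $C$ minus finitely many points, but that locus is an affine curve carrying nonconstant functions, so Atiyah's theorem is unavailable on it. To avoid this, I would use that $\mathrm{rank}(E)\ge 2>\dim C$: for a line bundle $L$ of large degree the sheaf $E^{*}\otimes L$ is globally generated of rank $>\dim C$, so a general section is nowhere vanishing and yields a surjection $E\twoheadrightarrow L$. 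Composing with $L\twoheadrightarrow L|_{D}$ for $D$ effective of degree $d$ gives a torsion quotient of $E$ of degree $d$, hence a morphism $\beta\colon\mathrm{Sym}^d(C)\to\mathcal Q^d(E)$, $D\mapsto[E\twoheadrightarrow L|_{D}]$. A flat base change computation then identifies $\beta^{*}F(V)\cong(V\otimes L)^{[d]}$, where for a bundle $A$ on $C$ I write $A^{[d]}$ for the tautological bundle on $\mathrm{Sym}^d(C)$ with fibre $A|_{D}=H^{0}(A\otimes\mathcal O_{D})$ over $D$. Thus $F(V)\cong F(W)$ gives $(V\otimes L)^{[d]}\cong(W\otimes L)^{[d]}$, and it suffices to show that $A^{[d]}\cong B^{[d]}$ forces $A\cong B$ (then $V\otimes L\cong W\otimes L$, so $V\cong W$).

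I would prove this tautological statement by induction on $d$, the case $d=1$ being trivial since $A^{[1]}=A$ on $\mathrm{Sym}^{1}(C)=C$. Fix $x_{0}\in C$ and consider $j\colon\mathrm{Sym}^{d-1}(C)\to\mathrm{Sym}^d(C)$, $D'\mapsto D'+x_{0}$. Evaluating a section of $A|_{D'+x_{0}}$ at $x_{0}$ is a canonical morphism of bundles on $\mathrm{Sym}^{d-1}(C)$; tensoring the sequence $0\to\mathcal O_{D'}(-x_{0})\to\mathcal O_{D'+x_{0}}\to\mathcal O_{x_{0}}\to 0$ with $A$ and pushing forward produces
\[
0\longrightarrow (A(-x_{0}))^{[d-1]}\longrightarrow j^{*}A^{[d]}\xrightarrow{\ \operatorname{ev}_{x_{0}}\ }\underline{A_{x_{0}}}\longrightarrow 0 ,
\]
where $\underline{A_{x_{0}}}=A_{x_{0}}\otimes_{k}\mathcal O_{\mathrm{Sym}^{d-1}(C)}$ is the trivial bundle of rank $r$. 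So the kernel of the natural evaluation is, up to the twist $A\rightsquigarrow A(-x_{0})$, again a tautological bundle, which is what makes the induction run.

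The heart of the argument is that the trivial quotient $\underline{A_{x_{0}}}$ is \emph{canonical}, so that it is respected by an arbitrary isomorphism $j^{*}A^{[d]}\cong j^{*}B^{[d]}$ even though the displayed sequence need not split. For this I would arrange, by taking $\deg L$ (hence the positivity of $A=V\otimes L$) large enough, that $H^{0}\big(\mathrm{Sym}^{d-1}(C),\,((A(-x_{0}))^{[d-1]})^{*}\big)=0$; dualizing the sequence then gives $\mathrm{Hom}(j^{*}A^{[d]},\mathcal O)=k^{r}$, spanned by the components of $\operatorname{ev}_{x_{0}}$, so $\operatorname{ev}_{x_{0}}$ is precisely the maximal trivial quotient of $j^{*}A^{[d]}$, namely the image of the functorial map $G\to\mathrm{Hom}(G,\mathcal O)^{*}\otimes_{k}\mathcal O$. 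Since this construction is functorial, an isomorphism $j^{*}A^{[d]}\cong j^{*}B^{[d]}$ carries kernel to kernel and descends to $(A(-x_{0}))^{[d-1]}\cong(B(-x_{0}))^{[d-1]}$; the inductive hypothesis yields $A(-x_{0})\cong B(-x_{0})$, whence $A\cong B$. This maximal-trivial-quotient device plays the role that Atiyah's Krull--Schmidt cancellation plays in Proposition \ref{prop1}.

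I expect the main obstacle to be exactly this canonicity. In Proposition \ref{prop1} disjointness of supports produces a genuine direct-sum splitting of a trivial bundle, which Atiyah's theorem cancels; here the evaluation sequence is only an extension, and an abstract isomorphism a priori ignores it, so one must force compatibility by making the complementary tautological bundle positive. The one computation requiring care is the vanishing $H^{0}\big(((A(-x_{0}))^{[d-1]})^{*}\big)=0$, which I would verify for $\deg L\gg 0$ (for instance by pulling back along the finite map $C^{\,d-1}\to\mathrm{Sym}^{d-1}(C)$, where the dual tautological bundle is, off the diagonals, a sum of pullbacks of $A(-x_{0})^{*}$ having no sections), ensuring it holds uniformly through the $d-1$ inductive steps, which successively twist $A$ down by the chosen points.
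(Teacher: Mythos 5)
Your route is genuinely different from the paper's. The paper never leaves the quot scheme: since $\mathrm{rank}(E)\,\geq\, 2$, the space ${\mathbb P}(E)\,=\,{\mathcal Q}^1(E)$ is a projective \emph{surface}, so one may fix $d-1$ points $y_1,\cdots,y_{d-1}$ of ${\mathbb P}(E)$ and map ${\mathcal P}\,=\,{\mathbb P}(E)\setminus\{y_1,\cdots,y_{d-1}\}$ to ${\mathcal Q}^d(E)$ by $z\,\longmapsto\,\mathbf{z}\oplus\mathbf{y}$; then $\Psi^*F(V)\,\cong\,\iota^*(\beta^*V\otimes{\mathcal O}_{{\mathbb P}(E)}(1))\oplus{\mathcal O}^{r(d-1)}$, and since ${\mathcal P}$ has no nonconstant functions, Atiyah's Krull--Schmidt theorem cancels the trivial summand exactly as in Proposition \ref{prop1}, after which $\iota_*\iota^*$ and $\beta_*$ recover $V$. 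In other words, the obstruction you correctly identify (moving a point along an affine curve) is evaded by moving the point in the fibre direction of ${\mathbb P}(E)$, not along $C$. Your alternative --- produce a surjection $E\twoheadrightarrow L$, pull $F(V)$ back along $\mathrm{Sym}^d(C)\to{\mathcal Q}^d(E)$ to get $(V\otimes L)^{[d]}$, and reconstruct a sufficiently positive bundle from its tautological image by induction on $d$ --- is a legitimate and interesting strategy, closer to the Krug--Rennemo circle of ideas; note that this reconstruction step is genuinely harder than Lemma \ref{lem1}, whose argument needs $\dim M\,\geq\,2$ and does not apply to $\mathrm{Sym}^d(C)$. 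Your first two steps (existence of $E\twoheadrightarrow L$ because $\mathrm{rank}(E)>\dim C$, and the base-change identification $\beta^*F(V)\cong(V\otimes L)^{[d]}$) are correct, as is the formal part of the maximal-trivial-quotient induction: the coevaluation $G\to\mathrm{Hom}(G,{\mathcal O})^*\otimes{\mathcal O}$ is natural, and once it is identified with $\mathrm{ev}_{x_0}$ an isomorphism does carry kernel to kernel.

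The one genuine gap is the step you yourself flag: the vanishing $H^0\bigl(\mathrm{Sym}^{e}(C),\,((A')^{[e]})^*\bigr)\,=\,0$ for $A'=A(-x_0)$, $e=d-1$. The justification you sketch does not work. The comparison map $P^*\bigl((A')^{[e]}\bigr)\longrightarrow\bigoplus_{i}p_i^*A'$ on $C^{e}$ is an isomorphism only off the big diagonal, which is a \emph{divisor}; hence a global section of $\bigl(P^*((A')^{[e]})\bigr)^*$ corresponds to a section of $\bigoplus_i p_i^*(A')^*$ only over the non-proper open set $C^{e}\setminus\Delta$, and $H^0(C,(A')^*)=0$ gives no control over sections on that open set (nor does such a section extend across $\Delta$ for free). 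So ``off the diagonals it is a sum of pullbacks with no sections'' does not yield the conclusion. The vanishing itself is true for $A'$ sufficiently positive and can be proved, for instance, by a parallel induction on $e$ using the finite surjective addition map $a\,:\,C\times\mathrm{Sym}^{e-1}(C)\to\mathrm{Sym}^{e}(C)$: there is an exact sequence $0\to pr_1^*A'\otimes{\mathcal O}(-{\mathcal D}')\to a^*\bigl((A')^{[e]}\bigr)\to pr_2^*\bigl((A')^{[e-1]}\bigr)\to 0$ with ${\mathcal D}'$ the universal divisor, so $\mathrm{Hom}\bigl(a^*(A')^{[e]},{\mathcal O}\bigr)$ is controlled by $\mathrm{Hom}\bigl((A')^{[e-1]},{\mathcal O}\bigr)$ together with the spaces $\mathrm{Hom}(A',{\mathcal O}(D'))$ for $D'$ effective of degree $e-1$; the latter vanish once every quotient line bundle of $A'$ has degree greater than $e-1$, which $\deg L\gg 0$ guarantees uniformly through all $d-1$ twists by the chosen points. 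With that lemma supplied (for both $V\otimes L$ and $W\otimes L$), your argument closes and gives a correct, if longer, alternative proof.
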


\begin{proof}
Since $\text{rank}(F(V))$ and $\text{rank}(F(W))$ are $d\cdot\text{rank}(V)$ and $d\cdot\text{rank}(W)$ respectively,
from the given condition that $F(V)$ and $F(W)$ are isomorphic we conclude that $\text{rank}(V)\,=\, \text{rank}(W)$.
Let $r$ denote $\text{rank}(V)\,=\, \text{rank}(W)$.

Let
\begin{equation}\label{e5}
\beta\,:\, {\mathbb P}(E)\, \longrightarrow\, X
\end{equation}
be the projective bundle parametrizing the hyperplanes in the fibers of $E$. So ${\mathbb P}(E)\,=\, {\mathcal Q}^1(E)$. For any
$z\, \in\, \beta^{-1}(x)\, \subset\, {\mathbb P}(E)$, if $H(z)\, \subset\, E_x$ is the corresponding hyperplane, then the element
of ${\mathcal Q}^1(E)$ for $z$ represents the quotient sheaf $E \, \longrightarrow\, E_x/H(z)$ of $E$. For any $z\, \in\,
{\mathbb P}(E)$, the quotient sheaf map from $E$ to the torsion quotient $E_x/H(z)$ of $E$ of degree $1$ corresponding to $z$ will be denoted by $\textbf{z}$.

Fix $d-1$ distinct points $x_1,\, \cdots ,\, x_{d-1}$ of $X$. Fix points $y_i\, \in\, \beta^{-1}(x_i)$, $1\, \leq\, i\, \leq\, d-1$,
where $\beta$ is the projection in \eqref{e5}. The complement ${\mathbb P}(E)\setminus \{y_1,\, \cdots,\, y_{d-1}\}$ will be denoted
by ${\mathcal P}$. Let
\begin{equation}\label{e6}
\iota\, :\, {\mathcal P} \, \hookrightarrow\, {\mathbb P}(E)
\end{equation}
be the inclusion map.

Note that the subset $\{y_1,\, \cdots,\, y_{d-1}\}$ defines a point of ${\mathcal Q}^{d-1}(E)$
representing the quotient $\bigoplus_{j=1}^{d-1} \textbf{y}_j$ of $E$; this point of
${\mathcal Q}^{d-1}(E)$ will be denoted by $\textbf{y}$. We have a morphism
$$
\Psi\, :\, {\mathcal P}\, \longrightarrow\, {\mathcal Q}^d(E),\, \ \ z\, \longmapsto\, \textbf{z}\oplus \textbf{y};
$$
recall that ${\mathbb P}(E)\,=\, {\mathcal Q}^1(E)$ and both $\textbf{y}$ and $\textbf{z}$ are quotients of $E$.

Now the vector bundle $\Psi^*F(V)$ (respectively, $\Psi^*F(W)$) is isomorphic to 
$(\iota^*((\beta^*V)\otimes {\mathcal O}_{{\mathbb P}(E)}(1)))\oplus A$ (respectively,
$(\iota^*((\beta^*W)\otimes {\mathcal O}_{{\mathbb P}(E)}(1)))\oplus A$), where $\iota$ and 
$\beta$ are the maps in \eqref{e6} and \eqref{e5} respectively, and $A$ is a trivial vector 
bundle on $\mathcal P$ of rank $r(d-1)$; the tautological line bundle on ${\mathbb P}(E)$
is denoted by ${\mathcal O}_{{\mathbb P}(E)}(1)$.

Since $V$ and $W$ are isomorphic we conclude that
$(\iota^*((\beta^*V)\otimes {\mathcal O}_{{\mathbb P}(E)}(1)))\oplus A$ and
$(\iota^*((\beta^*V)\otimes {\mathcal O}_{{\mathbb P}(E)}(1)))\oplus A$ are isomorphic.
As there are no nonconstant functions on $\mathcal P$ it follows that 
$(\iota^*\beta^*V)\otimes {\mathcal O}_{{\mathbb P}(E)}(1)$ and
$(\iota^*\beta^*W)\otimes {\mathcal O}_{{\mathbb P}(E)}(1)$ are isomorphic.
This implies that $\iota^*\beta^*V$ and $\iota^*\beta^*W$ are isomorphic.

The direct image $\iota_*\iota^*\beta^*V$ (respectively, $\iota_*\iota^*\beta^*W$) is
$\beta^*V$ (respectively, $\beta^*W$). Hence we conclude that $\beta^*V$ and $\beta^*W$ are isomorphic.
So $\beta_*\beta^*V\,=\, V$ is isomorphic to $\beta_*\beta^*W\,=\, W$.
\end{proof}

\section{Action of group on a curve}\label{se3}

Let $C$ be an irreducible smooth projective curve, and let $\Gamma$ be a finite group
acting faithfully on $C$. Consider the quotient curve
\begin{equation}\label{e7}
f\, :\, C\, \longrightarrow\, Y \,:=\, C/\Gamma .
\end{equation}
For any vector bundle $V$ on $Y$, the pullback $f^*V$ is a $\Gamma$-equivariant vector bundle on $C$.

The order of the group $\Gamma$ is denoted by $d$. We have a morphism
\begin{equation}\label{e8}
\rho\, :\, Y\, \longrightarrow\, \text{Sym}^d(C)
\end{equation}
that sends any $y\, \in\, Y$ to the element of $\text{Sym}^d(C)$ given by the scheme-theoretic inverse
image $f^{-1}(y)$, where $f$ is the map in \eqref{e7}. To describe $\rho$ explicitly, let
$\{z_1,\, z_2,\, \cdots,\, z_n\}$ be the reduced inverse image $f^{-1}(y)_{\rm red}$. Then
$$
\rho(y)\, =\, \sum_{i=1}^n b_iz_i,
$$
where $b_i$ is the order of the isotropy subgroup $\Gamma_{z_i}\, \subset\, \Gamma$
of $z_i$ for the action of $\Gamma$ on $C$. Note that $\rho$ is an embedding.

The action of $\Gamma$ on $C$ produces an action of $\Gamma$ on $\text{Sym}^d(C)$. The action of any $\gamma
\, \in\, \Gamma$ sends any $(x_1,\, \cdots, \, x_d)\, \in\, \text{Sym}^d(C)$ to $(\gamma(x_1),\, \cdots, \, \gamma(x_d))$.
We have
\begin{equation}\label{f3}
\rho(Y)\, \subset\, \text{Sym}^d(C)^\Gamma .
\end{equation}

We note that $\text{Sym}^d(C)$ is an irreducible smooth projective variety of dimension $d$. As in \eqref{f2},
\begin{equation}\label{f4}
{\mathbb S}\, \subset\, C\times\text{Sym}^d(C)
\end{equation}
is the tautological subscheme parametrizing all $(c,\, x) \, \in\, C\times\text{Sym}^d(C)$ such that $c\,\in\, x$.
Let 
\begin{equation}\label{f5}
Q_C\, :\, {\mathbb S}\, \longrightarrow\, C \ \ \text{ and }\ \ Q_S\, :\, {\mathbb S}\, \longrightarrow\, \text{Sym}^d(C)
\end{equation}
be the natural projections. For any vector bundle $E$ on $C$ of rank $r$, the direct image
\begin{equation}\label{e9}
{\widehat E}\, :=\, Q_{S*}Q^*_C E
\end{equation}
is a vector bundle on $\text{Sym}^d(C)$ of rank $dr$.

We will describe an alternative construction of the vector bundle ${\widehat E}$ in \eqref{e9}.
For $1\, \leq\, i\, \leq\, d$, let
$$
p_i\, :\, C^d\, \longrightarrow\, C
$$
be the projection to the $i$-th factor. Let
\begin{equation}\label{e10}
P\, :\, C^d\, \longrightarrow\, \text{Sym}^d(C)
\end{equation}
be the quotient map for the action of the symmetric group $S_d$ that permutes the factors of $C^d$. The action of $S_d$ on $C^d$
lifts to the vector bundle
$$
E^{[d]}\, :=\, \bigoplus_{i=1}^d p^*_iE \, \longrightarrow\, C^d.
$$
The action of $S_d$ on $E^{[d]}$ produces an action of $S_d$ on $P_*E^{[d]}$, where $P$ is the projection in \eqref{e10}.
The vector bundle ${\widehat E}$ in \eqref{e9} coincides with the $S_d$--invariant part
$$
(P_*E^{[d]})^{S_d}\, \subset\, P_*E^{[d]}.
$$

The actions of $\Gamma$ on $C$ and $\text{Sym}^d(C)$ (see \eqref{f3}) together produce a diagonal action of $\Gamma$ on
$C\times \text{Sym}^d(C)$. This action of $\Gamma$ on $C\times\text{Sym}^d(C)$ preserves the subscheme $\mathbb S$
in \eqref{f4}. For this action of $\Gamma$ on $\mathbb S$, the projections $Q_C$ and $Q_S$ in \eqref{f5}
are evidently $\Gamma$--equivariant.

Now let $E$ be a $\Gamma$--equivariant vector bundle on $C$. Since the projections $Q_C$ and $Q_S$ in \eqref{f5}
are $\Gamma$--equivariant, the vector bundle ${\widehat E}$ in \eqref{e9} is also $\Gamma$--equivariant. From
\eqref{f3} it now follows that the vector bundle
\begin{equation}\label{e11}
\rho^*{\widehat E} \, \longrightarrow\, Y
\end{equation}
is equipped with an action of $\Gamma$ over the trivial action of $\Gamma$ on $Y$.

\begin{proposition}\label{prop3}
Let $E$ and $F$ be vector bundles on $Y$ such that the corresponding $\Gamma$--equivariant vector bundles
$\rho^*\widehat{f^*E}$ and $\rho^*\widehat{f^*F}$ on $Y$ are isomorphic. Then $E$ and $F$ are isomorphic.
\end{proposition}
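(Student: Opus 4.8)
The plan is to reduce the statement to a clean identity on $Y$ and then to recover $E$ and $F$ by taking $\Gamma$--invariants. The central step is a base-change formula: for every $\Gamma$--equivariant vector bundle $G$ on $C$ there is a $\Gamma$--equivariant isomorphism
$$
\rho^*\widehat{G}\,\cong\, f_*G .
$$
To establish this I would first identify the scheme-theoretic pullback of the tautological subscheme. Pulling back ${\mathbb S}\,\subset\, C\times\text{Sym}^d(C)$ along $\text{id}_C\times\rho$ yields the closed subscheme of $C\times Y$ whose fibre over $y$ is the length-$d$ scheme $f^{-1}(y)$; this is exactly the graph $\Gamma_f$ of $f$, canonically isomorphic to $C$ by the first projection. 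Under this identification the projection $Q_S$ restricts to $f\,:\,C\,\longrightarrow\, Y$ and $Q_C$ restricts to the identity of $C$, so finite flat base change for $Q_S$ (which is finite flat of degree $d$) gives $\rho^*\widehat{G}=\rho^*Q_{S*}Q_C^*G\cong f_*G$. The isomorphism is $\Gamma$--equivariant because $\rho(Y)\subset\text{Sym}^d(C)^\Gamma$ by \eqref{f3} and the entire base-change square is $\Gamma$--equivariant.

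Applying this with $G=f^*E$ and invoking the projection formula, I would obtain the $\Gamma$--equivariant isomorphisms
$$
\rho^*\widehat{f^*E}\,\cong\, f_*f^*E\,\cong\, E\otimes f_*{\mathcal O}_C ,
$$
and likewise for $F$. On the right-hand side $E$ carries the trivial $\Gamma$--action (it lives on $Y$, on which $\Gamma$ acts trivially), while $\Gamma$ acts on $f_*{\mathcal O}_C$ through the deck transformations; this is the key point isolated already around \eqref{e11}. Generically on $Y$ the fibre of $f_*{\mathcal O}_C$ is the regular representation of $\Gamma$, and in all cases one has $(f_*{\mathcal O}_C)^\Gamma={\mathcal O}_Y$ since $Y=C/\Gamma$.

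With these identifications the hypothesis becomes a $\Gamma$--equivariant isomorphism $E\otimes f_*{\mathcal O}_C\cong F\otimes f_*{\mathcal O}_C$. Taking $\Gamma$--invariants, and using that $\Gamma$ acts trivially on the $E$-- and $F$--factors together with $(f_*{\mathcal O}_C)^\Gamma={\mathcal O}_Y$, I obtain
$$
E\,=\,(E\otimes f_*{\mathcal O}_C)^\Gamma\,\cong\,(F\otimes f_*{\mathcal O}_C)^\Gamma\,=\,F ,
$$
which is the desired conclusion. I expect the main obstacle to lie entirely in the first paragraph: verifying that $(\text{id}_C\times\rho)^*{\mathbb S}$ agrees with the graph of $f$ \emph{as a subscheme}, not merely set-theoretically, especially over the branch points of $f$, where $\rho(y)$ is nonreduced—here one compares two closed subschemes of $C\times Y$ that are finite flat of degree $d$ over $Y$ with identical fibres, so their ideal sheaves coincide by flatness—and checking that the resulting base-change isomorphism is genuinely $\Gamma$--equivariant. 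Once the base-change formula is in place the remaining invariants argument is formal and, unlike the proofs of Proposition \ref{prop1} and Lemma \ref{lem1}, requires no appeal to Atiyah's theorem.
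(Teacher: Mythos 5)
Your proposal is correct and follows essentially the same route as the paper: your base-change identification of $(\mathrm{id}_C\times\rho)^*{\mathbb S}$ with the graph of $f$ is exactly the paper's diagram \eqref{e12} (where $Q'_C\,:\,Q_S^{-1}(\rho(Y))\,\longrightarrow\, C$ is shown to be an isomorphism), and the subsequent projection-formula and $\Gamma$--invariants argument via $(f_*{\mathcal O}_C)^\Gamma\,=\,{\mathcal O}_Y$ coincides with the paper's proof of \eqref{j12}. Your extra care about the scheme-theoretic (not merely set-theoretic) identification over the branch points is a welcome elaboration of the step the paper dismisses as ``straightforward to check,'' but it is not a different method.
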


\begin{proof}
The vector bundle $f^*E$ has a natural action of $\Gamma$ because it is pulled back from
$C/\Gamma$. The action of $\Gamma$ on $f^*E$ produces an action of $\Gamma$ on $f_*f^*E$ over the
trivial action of $\Gamma$ on $Y$. Similarly, $\Gamma$ acts on $f_*f^*F$.

Consider $Q^{-1}_S(\rho(Y))\, \subset\, \mathbb{S}$, where $Q_S$ and $\rho$ are the maps in \eqref{f5} and
\eqref{e8} respectively. Let
$$
Q'_C\, :=\, Q_C\big\vert_{Q^{-1}_S(\rho(Y))}\, :\, Q^{-1}_S(\rho(Y))\, \longrightarrow\, C
$$
be the restriction of the map $Q_C$ in \eqref{f5}. It is straightforward to check that this map $Q'_C$ is an isomorphism.
So we have the commutative diagram
\begin{equation}\label{e12}
\begin{matrix}
C & \xleftarrow{\,\,\,Q'_C\,\,} & Q^{-1}_S(\rho(Y))\\
f\Big\downarrow\,\, && Q_S\Big\downarrow\,\,\,\,\\
Y & \xrightarrow{\,\,\,\, \rho\,\,} & \rho(Y)
\end{matrix}
\end{equation}
where the horizontal maps are isomorphisms. Moreover, all the maps in \eqref{e12} are $\Gamma$--equivariant
with $\Gamma$ acting trivially on $Y$ and $\rho(Y)$. Therefore, from \eqref{e12} we conclude that there are isomorphisms
\begin{equation}\label{e13}
f_*f^*E \, \stackrel{\sim}{\longrightarrow}\, \rho^*\widehat{f^*E} \ \ \text{ and }\ \
f_*f^*F \, \stackrel{\sim}{\longrightarrow}\, \rho^*\widehat{f^*F}
\end{equation}
as $\Gamma$--equivariant vector bundles.

Since the $\Gamma$--equivariant vector bundles
$\rho^*\widehat{f^*E}$ and $\rho^*\widehat{f^*F}$ are isomorphic, from \eqref{e13} it follows that
\begin{equation}\label{j11}
f_*f^*E \, \, \stackrel{\sim}{\longrightarrow}\,\, f_*f^*F
\end{equation}
as $\Gamma$--equivariant vector bundles

Next we will show that
\begin{equation}\label{j12}
(f_*f^*E)^\Gamma \,=\, E \ \ \text{ and }\ \ (f_*f^*F)^\Gamma \,=\, F.
\end{equation}

To prove \eqref{j12}, first note that the action of $\Gamma$ on $C$ produces an
action of $\Gamma$ on $f_*{\mathcal O}_C$. The projection formula gives that
$$
f_*f^*E\,\, \stackrel{\sim}{\longrightarrow}\,\, E\otimes (f_*{\mathcal O}_C).
$$
The action of $\Gamma$ on $f_*{\mathcal O}_C$ and the trivial action of $\Gamma$ on $E$
together produce an action of $\Gamma$ on $E\otimes (f_*{\mathcal O}_C)$. The above isomorphism
between $f_*f^*E$ and $E\otimes (f_*{\mathcal O}_C)$ is evidently $\Gamma$--equivariant. Since
$(f_*{\mathcal O}_C)^\Gamma\,=\, {\mathcal O}_Y$, we conclude that \eqref{j12} holds.

Finally, the proposition follows from \eqref{j11} and \eqref{j12}.
\end{proof}

\section{Alternative constructions}

Let $C$ be a smooth projective curve over $k$ and $E$ a vector bundle on $C$. Unlike in Section \ref{sec2.3},
$E$ can be
a line bundle; we no longer assume $\text{rank}(E)$ to be at least two. As before, ${\mathcal Q}^d(E)$ denotes the quot scheme
that parametrizes the torsion quotients of $E$ of degree $d$. Let
\begin{equation}\label{ga}
\gamma\, :\, {\mathcal Q}^d(E)\, \longrightarrow\, \text{Sym}^d(C)
\end{equation}
be the natural Chow morphism.

For any vector bundle $V$ on $C$, consider the vector bundle $F(V)$ on ${\mathcal Q}^d(E)$ constructed in \eqref{e4}. We will
describe its direct image $\gamma_*F(V)$ on $\text{Sym}^d(C)$, where $\gamma$ is the map in \eqref{ga}.

For every $1\, \leq\, j\, \leq\, d$, let $\varphi_j\, :\, C^d\, \longrightarrow\, C$ be the projection to the
$j$-th factor. Take a vector bundle $V$ on $C$. We have the vector bundle
\begin{equation}\label{V}
{\mathcal V}\, :=\, \bigoplus_{j=1}^d \varphi^*_j (V\otimes E)\, \longrightarrow\, C^d\, .
\end{equation}
The symmetric group $S_d$ acts on $C^d$ by permuting the factors of the tensor product (see Section \ref{se2.2}). The corresponding
quotient is $\text{Sym}^d(C)$. As in \eqref{e10}, let 
\begin{equation}\label{P}
P\, :\, C^d\, \longrightarrow\, C^d/S_d\,=\, \text{Sym}^d(C)
\end{equation}
be the quotient map. The action of $S_d$ on $C^d$ has a natural lift to an action of $S_d$ on the vector bundle $\mathcal V$ in \eqref{V}.
This action of $S_d$ on $\mathcal V$ produces an action of $S_d$ on the direct image $P_*{\mathcal V}$, where $P$ is the projection
in \eqref{P}.

\begin{lemma}\label{lem2}
The direct image $\gamma_*F(V)$ on $\text{Sym}^d(C)$, where $F(V)$ and $\gamma$ are as in \eqref{e4} and \eqref{ga} respectively,
is naturally identified with the $S_d$--invariant part
$$
(P_*{\mathcal V})^{S_d}\, \subset\,P_*{\mathcal V}
$$
for the above action of $S_d$ on $P_*{\mathcal V}$.
\end{lemma}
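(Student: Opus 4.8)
The plan is to make the identification by working over the symmetric product, comparing the two direct images fibrewise and then checking that the comparison is compatible with the $S_d$-actions. The essential point is that both $\gamma_*F(V)$ and $(P_*\mathcal{V})^{S_d}$ are vector bundles on $\mathrm{Sym}^d(C)$ whose fibres over a generic point (a reduced divisor $x_1 + \cdots + x_d$ with distinct $x_i$) are naturally the same vector space, namely $\bigoplus_{i=1}^d (V \otimes E)_{x_i}$, and the task is to upgrade this pointwise agreement to a canonical global isomorphism of sheaves.

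\medskip

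First I would recall the factorisation of the Chow morphism. By construction $\gamma$ fits together with the tautological data so that the support of $\mathbf{Q}$ maps, via $\Phi_Q$ followed by $\gamma$, onto the tautological subscheme $\mathbb{S} \subset C \times \mathrm{Sym}^d(C)$ of \eqref{f4}. Concretely, the length-$d$ quotient represented by a point of $\mathcal{Q}^d(E)$ has, as its support divisor, the image point of $\gamma$, so pushing $(\Phi_C^* V)\otimes \mathbf{Q}$ down to $\mathcal{Q}^d(E)$ and then to $\mathrm{Sym}^d(C)$ computes the sections of $(V \otimes E)$ restricted to that divisor. The key geometric input is that the quot scheme $\mathcal{Q}^d(E)$ maps properly and \emph{birationally} onto $\mathrm{Sym}^d(C)$ (an isomorphism over the locus of reduced divisors missing the degeneracy locus), so that $\gamma_*$ applied to $F(V)$ recovers exactly the ``divisor of sections'' sheaf and no higher direct images intervene in the relevant range.

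\medskip

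Next I would set up the $S_d$-side. Pulling back along the quotient map $P$ of \eqref{P}, the fibre of $\mathcal{V} = \bigoplus_j \varphi_j^*(V \otimes E)$ over $(x_1, \ldots, x_d) \in C^d$ is $\bigoplus_{i=1}^d (V \otimes E)_{x_i}$, with $S_d$ permuting the summands exactly as it permutes the points; hence the fibre of $(P_*\mathcal{V})^{S_d}$ over the image divisor is the space of $S_d$-invariant (equivalently, unordered) collections of such vectors. This matches the description of the fibre of $\gamma_*F(V)$ from the previous paragraph. I would then write down the comparison map as the composite of base change for the diagram relating $C^d \to \mathrm{Sym}^d(C)$ with the divisor $\mathbb{S}$ (or equivalently invoke the alternative construction of $\widehat E$ as $(P_* E^{[d]})^{S_d}$ already established in Section \ref{se3}, which is the identical mechanism with $E$ replaced by $V \otimes E$), and verify it is an isomorphism away from a codimension-$\geq 2$ locus.

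\medskip

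\textbf{The main obstacle} I expect is the behaviour over the non-reduced (diagonal) locus of $\mathrm{Sym}^d(C)$: where points collide, both the quot-scheme fibre and the $S_d$-invariant fibre can jump in a subtler way, and the naive pointwise identification must be shown to extend across this locus. To handle this I would argue that both sheaves are locally free of the same rank $dr$ (the first because $F(V)$ is a bundle and $\gamma$ is finite on the relevant strata, the second because taking $S_d$-invariants of a direct image along a finite flat—after suitable normalisation—quotient map yields a reflexive sheaf on the normal variety $\mathrm{Sym}^d(C)$), so the canonically defined comparison morphism, being an isomorphism on the complement of a closed subset of codimension at least two, must be an isomorphism everywhere by normality of $\mathrm{Sym}^d(C)$ together with reflexivity. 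The remaining checks—$S_d$-equivariance of the comparison and commutativity with the defining maps—are routine diagram chases that I would not grind through here.
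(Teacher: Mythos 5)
Your overall strategy---write down a natural comparison map $\gamma_*F(V)\to (P_*{\mathcal V})^{S_d}$ and check it is an isomorphism---is the same as the paper's (whose proof is itself only the assertion that the natural homomorphism lands in the invariants and induces an isomorphism). However, the two places where you supply actual justification both contain errors. First, the Chow morphism $\gamma\,:\,{\mathcal Q}^d(E)\,\longrightarrow\,{\rm Sym}^d(C)$ is \emph{not} birational unless $E$ is a line bundle: in this section ${\rm rank}(E)=r$ is arbitrary, and for $r\,\geq\, 2$ one has $\dim {\mathcal Q}^d(E)\,=\,dr\,>\,d$, the fibre of $\gamma$ over a reduced divisor $x_1+\cdots+x_d$ being $\prod_{i}{\mathbb P}(E_{x_i})$. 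There is no ``degeneracy locus'' to excise. The correct reason the fibre of $\gamma_*F(V)$ over such a divisor is $\bigoplus_i (V\otimes E)_{x_i}$ is cohomological, not birational: $F(V)$ restricts to $\bigoplus_i {\rm pr}_i^*\bigl(V_{x_i}\otimes {\mathcal O}_{{\mathbb P}(E_{x_i})}(1)\bigr)$ on this product of projective spaces, and $H^0({\mathbb P}(E_x),{\mathcal O}(1))\,=\,E_x$ with vanishing higher cohomology. That computation, together with the base-change statement needed to turn it into a statement about $\gamma_*$ and $R^i\gamma_*$, is exactly the content required, and it is missing from your argument.

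Second, your extension argument across the diagonal cannot work as stated: the locus of non-reduced divisors in ${\rm Sym}^d(C)$ has dimension $d-1$, i.e.\ codimension \emph{one}, not $\geq 2$. A morphism of reflexive (even locally free) sheaves of the same rank on a normal variety that is an isomorphism off a divisor need not be an isomorphism---the inclusion ${\mathcal O}(-D)\,\hookrightarrow\,{\mathcal O}$ is the standard counterexample---so normality plus reflexivity buys you nothing here. To close the argument you would have to verify the comparison map over non-reduced divisors as well (for instance by cohomology and base change applied to the punctual fibres of $\gamma$ and to the finite flat map $P$), or show that the determinant of the comparison map is nowhere vanishing. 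As it stands the proposal correctly identifies both sheaves generically but does not prove the lemma.
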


\begin{proof}
There is a natural homomorphism
$$
\varpi\, :\, F(V)\, \longrightarrow\, P_*{\mathcal V}\, .
$$
It is straightforward to check that $\varpi(F(V))\, \subset\,
(P_*{\mathcal V})^{S_d}\, \subset\,P_*{\mathcal V}$ and that the resulting
homomorphism $F(V)\, \longrightarrow\,(P_*{\mathcal V})^{S_d}$ is an isomorphism.
\end{proof}

Let
\begin{equation}\label{h1}
\Psi_C\, :\, C\times \text{Sym}^d(C) \, \longrightarrow\, C \ \ \text{ and }\ \ \Psi_S\, :\, C\times
\text{Sym}^d(C) \, \longrightarrow\, \text{Sym}^d(C)
\end{equation}
be the natural projections. 

Consider $(\text{Id}_C \times \gamma)_*{\mathbf Q}$ on $C\times\text{Sym}^d(C)$, where $\mathbf Q$ is the sheaf in
\eqref{y1} and $\gamma$ is the map in \eqref{ga}. Given a vector bundle $V$ on $C$, we have the direct image
\[
G(V) \,:= \, \Psi_{S*}(\Psi_C^* V \otimes (\text{Id}_C \times \gamma)_*{\mathbf Q}) \,\longrightarrow\, \text{Sym}^d(C),
\]
where $\Psi_C$ and $\Psi_S$ are projections in \eqref{h1}.
 
\begin{proposition}
For any vector bundle $V$ on $C$ there is a natural isomorphism
\[
\gamma_*F(V) \,\simeq\, G(V),
\]
where $F(V)$ is constructed in \eqref{e4}.
\end{proposition}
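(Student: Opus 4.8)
The plan is to deduce the isomorphism formally from two standard facts: the functoriality of direct images under composition of morphisms, and the projection formula for a locally free sheaf. Write $g \,:=\, \text{Id}_C \times \gamma \,:\, C\times{\mathcal Q}^d(E)\,\longrightarrow\, C\times\text{Sym}^d(C)$. The first step is to record the two evident relations among the projections in \eqref{j1} and \eqref{h1}, namely
\[
\Psi_C\circ g\,=\,\Phi_C \ \ \text{ and }\ \ \Psi_S\circ g\,=\,\gamma\circ\Phi_Q,
\]
both of which follow at once from the fact that $g$ acts as the identity on the $C$--factor and as $\gamma$ on the $\text{Sym}^d(C)$--factor.

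Next I would rewrite $\gamma_*F(V)$ using the definition \eqref{e4} together with the second relation above and the functoriality $(\gamma\circ\Phi_Q)_*\,=\,\Psi_{S*}\circ g_*$:
\[
\gamma_*F(V)\,=\,\gamma_*\Phi_{Q*}\big((\Phi^*_C V)\otimes{\mathbf Q}\big)\,=\,\Psi_{S*}\Big(g_*\big((\Phi^*_C V)\otimes{\mathbf Q}\big)\Big).
\]
Here all the direct images are the ordinary (underived) ones, exactly as in the definitions of $F(V)$ and $G(V)$.

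For the final step, the first relation gives $\Phi^*_C V\,=\,g^*(\Psi^*_C V)$, so that $(\Phi^*_C V)\otimes{\mathbf Q}\,=\,g^*(\Psi^*_C V)\otimes{\mathbf Q}$. Since $\Psi^*_C V$ is locally free, the projection formula produces a natural isomorphism
\[
g_*\big(g^*(\Psi^*_C V)\otimes{\mathbf Q}\big)\,\simeq\,(\Psi^*_C V)\otimes g_*{\mathbf Q}\,=\,(\Psi^*_C V)\otimes(\text{Id}_C\times\gamma)_*{\mathbf Q}.
\]
Combining this with the previous display and comparing with the definition of $G(V)$ yields $\gamma_*F(V)\,\simeq\,\Psi_{S*}\big((\Psi^*_C V)\otimes(\text{Id}_C\times\gamma)_*{\mathbf Q}\big)\,=\,G(V)$, which is the assertion.

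Being essentially a chain of canonical identifications, this argument has no serious obstacle; the only points needing a word of care are that the projection formula is being applied at the level of ordinary direct images (which is legitimate, since for a locally free sheaf on the base it is a purely local statement on $\text{Sym}^d(C)$ and requires no properness or flatness hypotheses), and that the resulting isomorphism is natural in $V$ because each of the three steps --- functoriality of pushforward, the equality $\Phi^*_C V = g^*\Psi^*_C V$, and the projection-formula isomorphism --- is functorial in $V$. I expect the mild bookkeeping in checking this naturality to be the most delicate part, though it remains routine.
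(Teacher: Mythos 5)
Your argument is correct and coincides with the paper's own proof: both use the commutative square relating $\Phi_C,\Phi_Q$ to $\Psi_C,\Psi_S$ via $\text{Id}_C\times\gamma$, then apply functoriality of pushforward followed by the projection formula for the locally free sheaf $\Psi_C^*V$. No gaps to report.
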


\begin{proof}
Consider the following commutative diagram
\[
\xymatrix{
&& C \times {\mathcal Q}^d(E) \ar[dll]_-{\Phi_C} \ar[r]^{\Phi_Q} \ar[d]_{(\text{Id}_C \times \gamma)}
& {\mathcal Q}^d(E) \ar[d]_\gamma\\
 C && \ar[ll]^-{\Psi_C} C \times \text{Sym}^d(C) \ar[r]_{\Psi_S} & \text{Sym}^d(C).\\
}
\]
Now using above commutative diagram we can get the required isomorphism as follows
\begin{eqnarray*}
\gamma_* F(V) &\,=\, & \gamma_* \Phi_{Q*}((\Phi^*_C V)\otimes {\mathbf Q})\\
& \simeq\, & \Psi_{S*} (\text{Id} \times \gamma)_* ((\Phi^*_C V)\otimes {\mathbf Q})\\
&=\, & \Psi_{S*} (\text{Id} \times \gamma)_* ((\text{Id} \times \gamma)^*(\Psi^*_C V)\otimes {\mathbf Q}) \\
& \simeq\, & \Psi_{S*} ((\Psi^*_C V)\otimes (\text{Id} \times \gamma)_* {\mathbf Q}) \,~~ \text{(projection formula)}\\
&= \,& G(V). 
\end{eqnarray*}
\end{proof}

\section*{Mandatory declarations}

There is no conflict of interests regarding this manuscript. No funding was received for this manuscript.

\end{document}